\newtheorem{thm}{Theorem}[section]
\newtheorem{prop}{Proposition}[section]
\theoremstyle{definition}
\theoremstyle{remark}
\numberwithin{equation}{section}
\begin{document}

\title{It\^o differential representation of singular stochastic Volterra integral equations}%
\author{Nguyen Tien Dung\footnote{Email: dung\_nguyentien10@yahoo.com}
}

\date{May 9, 2020}
\maketitle
\begin{abstract}
In this paper we obtain an It\^o differential representation for a class of singular stochastic Volterra integral equations. As an application, we investigate the rate of convergence in the small time central limit theorem for the solution.
\end{abstract}
\noindent\emph{Keywords:} Stochastic integral equation, It\^o formula, Central limit theorem.\\
{\em 2010 Mathematics Subject Classification:} 60H20, 60F05.

\section{Introduction}Let $(B_t)_{t\in [0,T]}$ be a standard Brownian motion defined on a complete probability space $(\Omega,\mathcal{F},\mathbb{F},P)$, where $\mathbb{F}=(\mathcal{F}_t)_{t\in [0,T]}$ is a natural filtration generated by $B.$ It is known that stochastic Volterra integral equations of the form
\begin{equation}\label{fko3k1}
X_t=x_0+\int_0^tk(t,s)b(s,X_s)ds+\int_0^tg(t,s)\sigma(s,X_s)dB_s,\,\,\,t\in[0,T]
\end{equation}
play an important role in various fields such as mathematical finance, physics, biology, etc. Because of its applications, the class of stochastic Volterra integral equations has been investigated in several works. Among others, we mention \cite{Cochran1995,Wu2012} and references therein for the existence and uniqueness of solutions, \cite{Ferreyra2000} for comparison theorems, \cite{Mao2006,Son2018} for stability results, \cite{Jovanovi2002} for perturbed equations, \cite{Maleknejad2012} for numerical solutions.

One of main difficulties when studying the properties of (\ref{fko3k1}) and related problems is that we can not directly apply It\^o differential formula to (\ref{fko3k1}). In the case of the equations with regular kernels, we can avoid this difficult by rewriting (\ref{fko3k1}) in its It\^o differential form. In fact, we have
$$\int_0^tk(t,s)b(s,X_s)ds=\int_0^tk(s,s)b(s,X_s)ds+\int_0^t\bigg(\int_0^s \frac{\partial}{\partial s}k(s,u)b(u,X_u)du\bigg)ds,$$
$$\int_0^tg(t,s)\sigma(s,X_s)dB_s=\int_0^tg(s,s)\sigma(s,X_s)dB_s+\int_0^t\bigg(\int_0^s\frac{\partial}{\partial s}g(s,u)\sigma(u,X_u)dB_u\bigg)ds$$
and we obtain the following It\^o differential representation
$$dX_t=\big(k(t,t)b(t,X_t)+K(t)+G(t)\big)dt+g(t,t)\sigma(t,X_t)dB_t,\,\,\,t\in[0,T],$$
where $X_0=x_0\in \mathbb{R},$ $K(t):=\int_0^t \frac{\partial}{\partial t}k(t,u)b(u,X_u)du$ and $G(t):=\int_0^t\frac{\partial}{\partial t}g(t,u)\sigma(u,X_u)dB_u.$

Generally, this nice representation does not hold anymore if the kernels are singular, except we imposed additional conditions. The case, where  the kernel $g(t,s)$ is singular, has been discussed in our recent work \cite{ntdung2018}. We have to use the techniques of Malliavin calculus to obtain an It\^o-type differential transformation involving Malliavin derivatives and Skorokhod integrals, it is open to obtain an It\^o differential representation for the solution in this case. The aim of the present paper is to study the case, where  the kernel $k(t,s)$ is singular. 


In the whole paper, we require the assumption

\noindent$(A_1)$ The kernel $k(t,s)$ is defined on $\{0\leq s<t\leq T\}$ and has the following properties, for some $c>0,$

(i) $k(t,s)$ is differentiable with continuous partial derivatives and $\frac{\partial}{\partial t}k(t,s)=-\frac{\partial}{\partial s}k(t,s),$

(ii) $|\frac{\partial}{\partial t}k(t,s)|\leq c(t-s)^{\alpha-2}$ for some $\alpha\in (\frac{1}{2},1),$

(iii) $\int_s^t|k(t,u)|du\leq c(t-s)^{\bar{\alpha}}$ for some $\bar{\alpha}>\frac{1}{2},$

(iv) $\int_0^t|k(t,u)|^{p_0}du\leq c$ for some $p_0>1.$

\noindent Under $(A_1)$ and regular conditions on $b, \sigma$ and $g,$ we are able to obtain a nice It\^o differential representation for the solution in Theorem \ref{tmkl3}. This representation allows us to use It\^o differential formula for studying deeper properties of the solution. As a non-trivial application, our Theorem \ref{ttklms} provides an explicit bound on Wasserstein distance in the small time central limit theorem for singular stochastic Volterra integral equations of the form
\begin{equation}\label{fko3}
X_t=x_0+\int_0^t(t-s)^{\alpha-1}b(s,X_s)ds+\int_0^t\sigma(s,X_s)dB_s,\,\,\,t\in[0,T],
\end{equation}
where $x_0\in \mathbb{R}$ and $\alpha\in (\frac{1}{2},1).$ Here we set $k(t,s)=(t-s)^{\alpha-1}$ to illustrate the singularity of $k(t,s)$ and $g(t,s)=1$ for the simplicity. We also note that  the small time central limit theorem is useful particularly for studying the digital options in finance, see e.g. \cite{Gerhold2015}.

\section{The main results}
In the whole paper, we denote by $C$ a generic constant, whose value may change from one line to another. Since our investigation focuses on the singularity of $k(t,s),$ we are going to impose the following fundamental conditions on $b, \sigma$ and $g.$

\noindent$(A_2)$ The coefficients $b, \sigma: [0, T] \times \mathbb{R}\to \mathbb{R}$ are Lipschitz and have linear growth, i.e. there exists $L> 0$ such that: 
$$ |b(t,x)-b(t,y)|+|\sigma(t,x)-\sigma(t,y)| \leq L|x-y|, \quad \forall x, y\in \mathbb{R}, t\in[0,T] $$
and
$$ |b(t,x)|+|\sigma(t,x)| \leq L(1+|x|), \quad \forall x\in \mathbb{R}, t\in[0,T].$$
The kernel $g:[0,T]^2\to \mathbb{R}$ is differentiable in the variable $t,$ and both $g(t,s)$ and $\frac{\partial}{\partial t}g(t,s)$ are continuous.

\begin{prop}\label{kol,w}Let $(X_t)_{t\in [0,T]}$ be the solution of the equation (\ref{fko3k1}). Suppose Assumptions $(A_1)$ and $(A_2).$ Then, for any $p\geq1,$ there exists a positive constant $C$ such that
\begin{equation}\label{jdkm2}
E|X_t|^p\leq C,\,\,\,\forall\,t\in[0,T]
\end{equation}
and
\begin{equation}\label{jdkm3}
E|X_t-X_s|^p\leq C|t-s|^{\frac{p}{2}},\,\,\,\forall\,s,t\in[0,T].
\end{equation}
\end{prop}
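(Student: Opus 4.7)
The plan is to treat the Lebesgue and It\^o parts of (\ref{fko3k1}) separately and to rely on H\"older's inequality, the Burkholder-Davis-Gundy (BDG) inequality, and Gronwall's lemma, absorbing the singularity of $k$ via assumption $(A_1)$.

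For the moment bound (\ref{jdkm2}), I would first reduce to $p \ge q_0 := p_0/(p_0-1)$, since smaller $p$ follow from Jensen's inequality $E|X_t|^p \le (E|X_t|^{q_0})^{p/q_0}$. Applying H\"older's inequality with exponents $(p_0, q_0)$ to the drift, together with $(A_1)$(iv) and the linear growth of $b$, should give
\begin{equation*}
E\Bigl|\int_0^t k(t,s) b(s,X_s)\, ds\Bigr|^p \le C\Bigl(1 + \int_0^t E|X_s|^p\, ds\Bigr),
\end{equation*}
while BDG applied to the stochastic integral, combined with the boundedness of $g$ on $[0,T]^2$ and the linear growth of $\sigma$, would yield a bound of the same form. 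Summing and applying Gronwall's lemma then delivers (\ref{jdkm2}).

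For (\ref{jdkm3}), I would fix $0 \le s < t \le T$ and decompose
\begin{align*}
X_t - X_s &= \int_s^t k(t,u) b(u,X_u)\, du + \int_0^s [k(t,u)-k(s,u)] b(u,X_u)\, du \\
&\quad + \int_s^t g(t,u) \sigma(u,X_u)\, dB_u + \int_0^s [g(t,u)-g(s,u)] \sigma(u,X_u)\, dB_u \\
&=: I_1 + I_2 + I_3 + I_4.
\end{align*}
A weighted H\"older estimate applied to $I_1$ using $(A_1)$(iii) and the moment bound (\ref{jdkm2}) gives $E|I_1|^p \le C(t-s)^{\bar{\alpha} p}$. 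For $I_2$, I would use $(A_1)$(i) to write $k(t,u)-k(s,u) = \int_s^t \partial_r k(r,u)\, dr$ and then combine the pointwise bound $(A_1)$(ii) with Fubini to obtain the crucial estimate
\begin{equation*}
\int_0^s |k(t,u)-k(s,u)|\, du \le \frac{c}{1-\alpha} \int_0^s \bigl[(s-u)^{\alpha-1} - (t-u)^{\alpha-1}\bigr] du \le C(t-s)^\alpha;
\end{equation*}
the same weighted H\"older argument then yields $E|I_2|^p \le C(t-s)^{\alpha p}$. For the stochastic pieces, BDG gives $E|I_3|^p \le C(t-s)^{p/2}$ (since $g$ is bounded) and $E|I_4|^p \le C(t-s)^p$ (since $|g(t,u)-g(s,u)| \le C(t-s)$ by continuity of $\partial_t g$ on the compact square $[0,T]^2$). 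Because $\alpha, \bar{\alpha} > 1/2$ and $T < \infty$, all four terms are dominated by $C(t-s)^{p/2}$, proving (\ref{jdkm3}).

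The main obstacle is the estimate for $I_2$: a naive use of $(A_1)$(ii) yields $|k(t,u)-k(s,u)| \le c(t-s)(s-u)^{\alpha-2}$, which is not integrable in $u$ near $u = s$ since $\alpha - 2 < -1$. The fix is to integrate $\partial_r k(r,u)$ in $r$ before integrating in $u$, thereby producing the weaker singularity $(s-u)^{\alpha-1}$; the hypothesis $\alpha > 1/2$ in $(A_1)$(ii) is precisely what makes this integrable and secures the $(t-s)^{p/2}$ rate after the final H\"older step.
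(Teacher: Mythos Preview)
Your proposal is correct and follows essentially the same route as the paper: H\"older on the drift via $(A_1)$(iv), BDG on the It\^o integral, and Gronwall for (\ref{jdkm2}), then the identical four-term decomposition for (\ref{jdkm3}) with the key $I_2$ estimate obtained by integrating $\partial_r k(r,u)$ in $r$ before integrating in $u$. The only minor omission is that your reduction for the moment bound should be to $p\ge \max(q_0,2)$ rather than just $p\ge q_0$, since the H\"older step after BDG requires $p\ge 2$; the paper makes this explicit.
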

\begin{proof}By Lyapunov's inequality, we only need to prove (\ref{jdkm2}) and (\ref{jdkm3}) for $p>\frac{p_0}{p_0-1}\vee 2.$ Applying H\"older inequality with $p>\frac{p_0}{p_0-1},q=\frac{p}{p-1}<p_0$  yields
\begin{align*}
\big|\int_0^tk(t,s)b(s,X_s)ds\big|&\leq \bigg(\int_0^t|k(t,s)|^qds\bigg)^{\frac{1}{q}}\bigg(\int_0^t|b(s,X_s)|^pds\bigg)^{\frac{1}{p}},\,\,\,\forall\,t\in[0,T].
\end{align*}
Hence, by Assumption $(A_1),(iv),$ there exists a positive constant $C$ such that
\begin{align}
E\big|\int_0^tk(t,s)b(s,X_s)ds\big|^p&\leq C\int_0^tE|b(s,X_s)|^pds,\,\,\,\forall\,t\in[0,T].\label{jij2}
\end{align}
By the Burkholder-Davis-Gundy inequality there exists a positive constant $c_p$ such that
\begin{align*}
E\big|\int_0^tg(t,s)\sigma(s,X_s)dB_s\big|^p&\leq c_pE\big|\int_0^tg^2(t,s)\sigma^2(s,X_s)ds\big|^{\frac{p}{2}},\,\,\,\forall\,t\in[0,T].
\end{align*}
Then, by H\"older inequality and the continuity of $g(t,s),$ for each $p>2,$
\begin{align}
E\big|\int_0^tg(t,s)\sigma(s,X_s)dB_s\big|^p&\leq C\int_0^tE|\sigma(s,X_s)|^pds,\,\,\,\forall\,t\in[0,T].\label{jij3}
\end{align}
For each $p>\frac{p_0}{p_0-1}\vee 2,$ we now use the fundamental inequality $(a+b+c)^p\leq 3^{p-1}(a^p+b^p+c^p)$ and linear growth property of $b$ and $\sigma$ to get
\begin{align*}
E|X_t|^p&\leq 3^{p-1}\bigg(|x_0|^p+E\big|\int_0^tk(t,s)b(s,X_s)ds\big|^p+E\big|\int_0^tg(t,s)\sigma(s,X_s)dB_s\big|^p\bigg)\\
&\leq C\bigg(|x_0|^p+\int_0^tE|b(s,X_s)|^pds+\int_0^tE|\sigma(s,X_s)|^pds\bigg)\\
&\leq C\bigg(|x_0|^p+\int_0^tL^p2^{p-1}(1+E|X_s|^p)ds+\int_0^tL^p2^{p-1}(1+E|X_s|^p)ds\bigg)\\
&\leq C\bigg(1+\int_0^tE|X_s|^pds\bigg),\,\,\,\forall\,t\in[0,T].
\end{align*}
So we can obtain (\ref{jdkm2}) by using Gronwall's lemma.

It only remains to verify (\ref{jdkm3}). Without loss of generality we assume that $0\leq s<t\leq T.$ We have
\begin{multline*}
X_t-X_s=\int_0^s[k(t,u)-k(s,u)]b(u,X_u)du+\int_s^tk(t,u)b(u,X_u)du\\+\int_0^s[g(t,u)-g(s,u)]\sigma(u,X_u)dB_u+\int_s^tg(t,u)\sigma(u,X_u)dB_u
\end{multline*}
and
\begin{multline}\label{omwn5}
E|X_t-X_s|^p\leq 4^{p-1}\bigg(E\big|\int_0^s[k(t,u)-k(s,u)]b(u,X_u)du\big|^p+E\big|\int_s^tk(t,u)b(u,X_u)du\big|^p\\
+E\big|\int_0^s[g(t,u)-g(s,u)]\sigma(u,X_u)dB_u\big|^p+E\big|\int_s^tg(t,s)\sigma(u,X_u)dB_u\big|^p\bigg).
\end{multline}
Notice that $E|b(u,X_u)|^p+E|\sigma(u,X_u)|^p\leq C\,\forall\,u\in[0,T].$ We use H\"older inequality to get
\begin{align*}
E\big|&\int_0^s[k(t,u)-k(s,u)]b(u,X_u)du\big|^p\\
&\leq \left(\int_0^s|k(t,u)-k(s,u)|du\right)^{p-1}\int_0^s|k(t,u)-k(s,u)|E|b(u,X_u)|^pdu\\
&\leq C\left(\int_0^s|k(t,u)-k(s,u)|du\right)^{p}\\
&\leq C\left(\int_0^s[(s-u)^{\alpha-1}-(t-u)^{\alpha-1}]du\right)^{p}\,\,\text{by Assumption $(A_1),(ii)$}\\
&\leq C\left(s^{\alpha}+(t-s)^{\alpha}-t^{\alpha}]\right)^{p}\\
&\leq C(t-s)^{p\alpha},\,\,\,0\leq s\leq t\leq T,
\end{align*}
and
\begin{align*}
E\big|\int_s^tk(t,u)b(u,X_u)du\big|^p&\leq \left(\int_s^t|k(t,u)|du\right)^{p-1}\int_s^t|k(t,u)|E|b(u,X_u)|^pdu\\
&\leq C\left(\int_s^t|k(t,u)|du\right)^{p}\\
&\leq C(t-s)^{p\bar{\alpha}}\,\,\text{by Assumption $(A_1),(iii)$}.
\end{align*}
Since  $\frac{\partial}{\partial t}g(t,s)$ is continuous in $[0,T]^2,$ this implies that $|g(t,u)-g(s,u)|\leq C|t-s|.$ Hence, by the Burkholder-Davis-Gundy inequality, we can obtain
\begin{align*}
E\big|\int_0^s[g(t,u)-g(s,u)]\sigma(u,X_u)dB_u\big|^p\leq C|t-s|^p,\,\,\,\forall\,s,t\in[0,T].
\end{align*}
Also, it is easy to see that $E\big|\int_s^tg(t,u)\sigma(u,X_u)dB_u\big|^p\leq C|t-s|^{\frac{p}{2}}.$ Hence, recalling (\ref{omwn5}), we obtain
$$E|X_t-X_s|^p\leq C(|t-s|^{p\alpha}+|t-s|^{p\bar{\alpha}}+|t-s|^p+|t-s|^{\frac{p}{2}}),\,\,\,\forall\,s,t\in[0,T],$$
which leads us to (\ref{jdkm3}) because $\alpha,\bar{\alpha}>\frac{1}{2}.$ This completes the proof.
\end{proof}
We now are in a position to state and prove the first main result of the paper. To handle the singularity of $k(t,s),$ we need to additionally impose the following condition.

\noindent$(A_3)$ Given $\alpha$ as in Assumption $(A_1),(ii).$ There exists $\beta_1>1-\alpha$ such that
$$ |b(t,x)-b(s,x)|\leq L|t-s|^{\beta_1}, \quad \forall x\in \mathbb{R}, t,s\in[0,T] $$

\begin{thm}\label{tmkl3} Suppose Assumptions  $(A_1),(A_2)$ and $(A_3).$ The solution $(X_t)_{t\in[0,T]}$ of the equation (\ref{fko3k1}) admits the following It\^o differential representation
\begin{equation}\label{ymke}
dX_t=\big(k(t,0)b(t,X_t)+\varphi(t)+G(t)\big)dt+g(t,t)\sigma(t,X_t)dB_t,\,\,\,t\in[0,T],
\end{equation}
where $X_0=x_0$ and
$$\varphi(t):=-\int_0^t\frac{\partial}{\partial t}k(t,u)[b(t,X_t)-b(u,X_u)]du,\,\,\,G(t):=\int_0^t\frac{\partial}{\partial t}g(t,u)\sigma(u,X_u)dB_u\,\,\,t\in[0,T].$$
\end{thm}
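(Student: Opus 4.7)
I would prove (\ref{ymke}) by establishing its integrated form
$$X_t - x_0 = \int_0^t\bigl(k(s,0)b(s,X_s) + \varphi(s) + G(s)\bigr)ds + \int_0^t g(s,s)\sigma(s,X_s)dB_s$$
and comparing with (\ref{fko3k1}). The stochastic-integral term $\int_0^t g(t,u)\sigma(u,X_u)dB_u$ decomposes as $\int_0^t g(s,s)\sigma(s,X_s)dB_s + \int_0^t G(s)ds$ via the classical stochastic Fubini argument already sketched in the introduction; since $g$ and $\partial_t g$ are continuous, there is nothing singular here. The real work is for the drift $\Phi(t) := \int_0^t k(t,u)b(u,X_u)du$, for which I need to show
$$\Phi(t) = \int_0^t\bigl(k(s,0)b(s,X_s) + \varphi(s)\bigr)ds.$$

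To handle the singularity at $u = t$, I would regularize by excising a neighborhood of the diagonal. For $\varepsilon>0$, set $\Phi_\varepsilon(t) := \int_0^{(t-\varepsilon)\vee 0} k(t,u)b(u,X_u)\,du$. On $\{t>\varepsilon\}$ the integrand is smooth on its domain of integration, so Leibniz's rule gives
$$\frac{d\Phi_\varepsilon}{dt}(t) = k(t,t-\varepsilon)b(t-\varepsilon,X_{t-\varepsilon}) + \int_0^{t-\varepsilon}\tfrac{\partial}{\partial t}k(t,u)b(u,X_u)\,du.$$
Inside the last integral I would insert the splitting $b(u,X_u) = b(t,X_t) - [b(t,X_t) - b(u,X_u)]$ and use $(A_1)(i)$ to evaluate $\int_0^{t-\varepsilon}\tfrac{\partial}{\partial t}k(t,u)\,du = -\int_0^{t-\varepsilon}\tfrac{\partial}{\partial u}k(t,u)\,du = k(t,0) - k(t,t-\varepsilon)$. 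After regrouping, the two divergent boundary contributions combine into a single remainder $R_\varepsilon(t) := k(t,t-\varepsilon)[b(t-\varepsilon,X_{t-\varepsilon}) - b(t,X_t)]$, and the identity becomes
$$\frac{d\Phi_\varepsilon}{dt}(t) = k(t,0)b(t,X_t) + R_\varepsilon(t) - \int_0^{t-\varepsilon}\tfrac{\partial}{\partial t}k(t,u)[b(t,X_t) - b(u,X_u)]\,du.$$
The formal divergence of $k(t,t)$ has thus been absorbed into $R_\varepsilon$.

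Finally I would integrate this identity over $s \in [\varepsilon,t]$ (using $\Phi_\varepsilon(\varepsilon)=0$) and send $\varepsilon \to 0$. From $(A_1)(ii)$ one obtains $|k(t,t-\varepsilon)| \leq C\varepsilon^{\alpha-1}$ (by integrating $\partial_s k$ from $0$ to $t-\varepsilon$), while $(A_3)$ combined with the Hölder estimate (\ref{jdkm3}) of Proposition \ref{kol,w} yields $\|b(t-\varepsilon,X_{t-\varepsilon}) - b(t,X_t)\|_{L^p} \leq C(\varepsilon^{\beta_1} + \varepsilon^{1/2})$. Since $\beta_1 > 1-\alpha$ and $\alpha > \tfrac{1}{2}$, the product $\|R_\varepsilon(s)\|_{L^p} \to 0$, so $\int_\varepsilon^t R_\varepsilon(s)\,ds \to 0$ in $L^p$. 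The same exponent matching makes $(s-u)^{\alpha-2}\|b(s,X_s)-b(u,X_u)\|_{L^p}$ integrable in $u$ on $[0,s]$, so dominated convergence identifies the limit of the interior integral with $-\varphi(s)$; meanwhile $\Phi_\varepsilon(t) \to \Phi(t)$ in $L^p$ by $(A_1)(iii)$. The main obstacle is exactly this limit: a naive direct Leibniz differentiation of $\Phi$ is meaningless because both $k(t,t)$ and $\int_0^t \partial_t k(t,u)du$ diverge, and their cancellation is only formal until $(A_3)$'s exponent $\beta_1 > 1-\alpha$ is invoked to force $R_\varepsilon$ to vanish in the limit.
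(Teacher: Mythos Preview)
Your argument is correct, but it takes a different regularization route from the paper. The paper desingularizes by shifting the kernel, replacing $k(t,s)$ with $k(t+\varepsilon,s)$, which forces it to introduce an auxiliary equation $X^{(\varepsilon)}_t=x_0+\int_0^tk(t+\varepsilon,s)b(s,X^{(\varepsilon)}_s)ds+\int_0^tg(t,s)\sigma(s,X^{(\varepsilon)}_s)dB_s$ and then spend a full step proving the stability estimate $E|X^{(\varepsilon)}_t-X_t|^p\leq C\varepsilon^{p\alpha}$ before passing to the limit in the It\^o form of $X^{(\varepsilon)}$. You instead keep the original solution $X$ and excise the diagonal via $\Phi_\varepsilon(t)=\int_0^{(t-\varepsilon)\vee 0}k(t,u)b(u,X_u)\,du$, which lets you work pathwise with a single process throughout and bypass the auxiliary SDE and its convergence proof entirely. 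Both routes rely on the same algebraic cancellation---splitting $b(u,X_u)=b(t,X_t)-[b(t,X_t)-b(u,X_u)]$ and using $(A_1)(i)$ to convert the divergent boundary term into the tame remainder containing the increment $b(t-\varepsilon,X_{t-\varepsilon})-b(t,X_t)$ (the paper's analogue sits inside its $\varphi^{(\varepsilon)}$)---and both kill that remainder via $(A_3)$ together with the $L^p$ H\"older bound (\ref{jdkm3}). Your truncation approach is more elementary and shorter; the paper's kernel-shift approach yields, as a byproduct, a quantitative approximation of $X$ by solutions of regular Volterra equations, which may be of independent interest. One minor remark: your bound $|k(t,t-\varepsilon)|\leq C\varepsilon^{\alpha-1}$ from integrating $\partial_s k$ only gives $|k(t,t-\varepsilon)-k(t,0)|\leq C\varepsilon^{\alpha-1}$, but since $(A_1)(i)$ forces $k(t,s)=f(t-s)$, the quantity $k(s,s-\varepsilon)=f(\varepsilon)$ is constant in $s$ and the estimate goes through with an added bounded term $|f(t_0)|$ for any fixed $t_0>0$.
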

\begin{proof}We separate the proof into four steps.

\noindent{\it Step 1.} We first verify that $\varphi(s),s\in[0,T]$ is well defined. By Kolmogorov continuity theorem, it follows from (\ref{jdkm3}) that for any $\delta\in(0,\frac{1}{2})$ there exists a finite random variable $C(\omega)$ such that
$$|X_t-X_s|\leq C(\omega)|t-s|^{\frac{1}{2}-\delta},\,\,\,\forall\,s,t\in[0,T].$$
By Lipschitz property of $b$ and Assumption $(A_3)$ we deduce
\begin{equation}\label{jdkm33}
|b(s,X_s)-b(u,X_u)|\leq |b(s,X_s)-b(u,X_s)|+|b(u,X_s)-b(u,X_u)|\leq L[(s-u)^{\beta_1}+|X_s-X_u|]
\end{equation}
for all $s,u\in[0,T].$
We choose $\delta=\frac{\alpha}{2}-\frac{1}{4}\in(0,\frac{1}{2})$ to get
\begin{align*}
|\varphi(s)|
&\leq L\int_0^s\big|\frac{\partial}{\partial s}k(s,u)\big|[(s-u)^{\beta_1}+|X_s-X_u|]du\\
&\leq Lc\int_0^s(s-u)^{\alpha-2}[(s-u)^{\beta_1}+C(\omega)|t-s|^{\frac{1}{2}-\delta}]du\\
&=Lc\bigg[\frac{s^{\alpha+\beta_1-1}}{\alpha+\beta_1-1}+C(\omega)\frac{s^{\frac{\alpha}{2}-\frac{1}{4}}}{\frac{\alpha}{2}-\frac{1}{4}}\bigg],\,\,\,s\in[0,T].
\end{align*}
So $\varphi(s),s\in[0,T]$ is well defined because $\beta_1>1-\alpha$  and $\alpha>\frac{1}{2}.$ Furthermore, we have
\begin{align}
E|\varphi(s)|&\leq L\int_0^s\big|\frac{\partial}{\partial s}k(s,u)\big|[(s-u)^{\beta_1}+E|X_s-X_u|]du\notag\\
&\leq CL\int_0^s(s-u)^{\alpha-2}[(s-u)^{\beta_1}+(s-u)^{\frac{1}{2}}]du\notag\\
&\leq CL\bigg[\frac{s^{\alpha+\beta_1-1}}{\alpha+\beta_1-1}+\frac{s^{\alpha-\frac{1}{2}}}{\alpha-\frac{1}{2}}\bigg],\,\,\,s\in[0,T].\label{unkm1}
\end{align}
\noindent{\it Step 2.} For each $\varepsilon\in(0,1),$ we consider the stochastic Volterra integral equation
\begin{equation}\label{fko4}
X^{(\varepsilon)}_t=x_0+\int_0^tk(t+\varepsilon,s)b(s,X^{(\varepsilon)}_s)ds+\int_0^tg(t,s)\sigma(s,X^{(\varepsilon)}_s)dB_s,\,\,\,t\in[0,T].
\end{equation}
Since the kernel $k(t+\varepsilon,s)$ is smooth, Assumption $(A_2)$ ensures that the equation (\ref{fko4}) admits a unique solution $(X^{(\varepsilon)}_t)_{t\in[0,T]}.$ By using the same arguments as in the proof of Proposition \ref{kol,w} we also obtain
\begin{equation}\label{jdm3d}
E|X^{(\varepsilon)}_t-X^{(\varepsilon)}_s|^p\leq C|t-s|^{\frac{p}{2}},\,\,\,\forall\,s,t\in[0,T].
\end{equation}
We claim that, for any $p\geq 1,$
\begin{equation}\label{y7m9}
E|X^{(\varepsilon)}_t-X_t|^p\leq C\varepsilon^{p\alpha},\,\,\,\forall\,t\in[0,T].
\end{equation}
Here we emphasize that the constants $C$ in (\ref{jdm3d}) and (\ref{y7m9}) do not depend on $\varepsilon\in(0,1).$ We have
\begin{align*}
&X^{(\varepsilon)}_t-X_t=\int_0^t[k(t+\varepsilon,s)b(s,X^{(\varepsilon)}_s)-k(t,s)b(s,X_s)]ds+\int_0^tg(t,s)
[\sigma(s,X^{(\varepsilon)}_s)-\sigma(s,X_s)]dB_s\\
&=\int_0^t[k(t+\varepsilon,s)-k(t,s)]b(s,X_s)ds+\int_0^tk(t+\varepsilon,s)[b(s,X^{(\varepsilon)}_s)-b(s,X_s)]ds\\
&+\int_0^tg(t,s)[\sigma(s,X^{(\varepsilon)}_s)-\sigma(s,X_s)]dB_s,\,\,\,t\in[0,T].
\end{align*}
We use H\"older inequality to get, for any $p>1,$
\begin{align}
E\big|\int_0^t&[k(t+\varepsilon,s)-k(t,s)]b(s,X_s)ds\big|^p\notag\\
&\leq \bigg(\int_0^t|k(t+\varepsilon,s)-k(t,s)|ds\bigg)^{p-1}\int_0^t|k(t+\varepsilon,s)-k(t,s)|E|b(s,X_s)|^pds\notag\\
&\leq C\bigg(\int_0^t|k(t+\varepsilon,s)-k(t,s)|ds\bigg)^{p}\notag\\
&\leq C\bigg(\int_0^t[(t-s)^{\alpha-1}-(t-s+\varepsilon)^{\alpha-1}]ds\bigg)^p\,\,\text{by Assumption $(A_1),(ii)$}\notag\\
&\leq C\big(t^{\alpha}-(t+\varepsilon)^{\alpha}+\varepsilon^{\alpha}\big)^p\leq C\varepsilon^{p\alpha},\,\,\,t\in[0,T].
\end{align}
By using the same arguments as in the proof (\ref{jij2}) we obtain, for any $p> \frac{p_0}{p_0-1},$
\begin{align}
E\big|\int_0^tk(t+\varepsilon,s)[b(s,X^{(\varepsilon)}_s)-b(s,X_s)]ds\big|^p
&\leq C\int_0^t E|b(s,X^{(\varepsilon)}_s)-b(s,X_s)|^pds\notag\\
&\leq CL^p\int_0^tE|X^{(\varepsilon)}_s-X_s|^pds,\,\,\,t\in[0,T].
\end{align}
By using the same arguments as in the proof (\ref{jij3}) we obtain, for any $p> 2,$
\begin{align}
E\big|\int_0^tg(t,s)[\sigma(s,X^{(\varepsilon)}_s)-\sigma(s,X_s)]dB_s\big|^p
&\leq C\int_0^t E|\sigma(s,X^{(\varepsilon)}_s)-\sigma(s,X_s)|^pds\notag\\
&\leq CL^p\int_0^tE|X^{(\varepsilon)}_s-X_s|^pds,\,\,\,t\in[0,T].\label{y7m9a}
\end{align}
So, for each $p>\frac{p_0}{p_0-1}\vee 2,$ we can get
$$E|X^{(\varepsilon)}_t-X_t|^p\leq C\varepsilon^{p\alpha}+C\int_0^tE|X^{(\varepsilon)}_s-X_s|^pds,\,\,\,t\in[0,T].$$
By Gronwall's lemma $E|X^{(\varepsilon)}_t-X_t|^p\leq C\varepsilon^{p\alpha}e^{Ct}\leq C\varepsilon^{p\alpha}$ for all $t\in[0,T].$ Thus, by Lyapunov's inequality, the claim (\ref{y7m9}) is verified for  any $p\geq 1.$

\noindent{\it Step 3.} In this step, we represent (\ref{fko4}) as an It\^o stochastic differential equation. We put
$$Y^{(\varepsilon)}_t:=\int_0^tk(t+\varepsilon,s)b(s,X^{(\varepsilon)}_s)ds,\,\,\,t\in[0,T],$$
$$\varphi^{(\varepsilon)}(s):=-\int_0^s\frac{\partial}{\partial s}k(s+\varepsilon,u)[b(s,X^{(\varepsilon)}_s)-b(u,X^{(\varepsilon)}_u)]du,\,\,\,s\in[0,T].$$
We have
\begin{align*}
\frac{dY^{(\varepsilon)}_t}{dt}&=k(t+\varepsilon,t)b(t,X^{(\varepsilon)}_t)+\int_0^t\frac{\partial}{\partial t}k(t+\varepsilon,s)b(s,X^{(\varepsilon)}_s)ds\\
&=k(t+\varepsilon,t)b(t,X^{(\varepsilon)}_t)+\int_0^t\frac{\partial}{\partial t}k(t+\varepsilon,s)b(t,X^{(\varepsilon)}_t)ds-\int_0^t\frac{\partial}{\partial t}k(t+\varepsilon,s)[b(t,X^{(\varepsilon)}_t)-b(s,X^{(\varepsilon)}_s)]ds\\
&=k(t+\varepsilon,0)b(t,X^{(\varepsilon)}_t)-\int_0^t\frac{\partial}{\partial t}k(t+\varepsilon,s)[b(t,X^{(\varepsilon)}_t)-b(s,X^{(\varepsilon)}_s)]ds\,\,\text{by Assumption $(A_1),(i)$}\\
&=k(t+\varepsilon,0)b(t,X^{(\varepsilon)}_t)+\varphi^{(\varepsilon)}(t),
\end{align*}
or equivalently
\begin{align*}
\int_0^tk(t+\varepsilon,s)b(s,X^{(\varepsilon)}_s)ds&=\int_0^t\big(k(s+\varepsilon,0)b(s,X^{(\varepsilon)}_s)+\varphi^{(\varepsilon)}(s)\big)ds,
\,\,\,t\in[0,T]
\end{align*}
Inserting this relation into (\ref{fko4}) gives us
\begin{equation}\label{ujm1}
X^{(\varepsilon)}_t=x_0+\int_0^t\big(k(s+\varepsilon,0)b(s,X^{(\varepsilon)}_s)+\varphi^{(\varepsilon)}(s)\big)ds+\int_0^tg(t,s)\sigma(s,X^{(\varepsilon)}_s)
dB_s,\,\,\,t\in[0,T].
\end{equation}
\noindent{\it Step 4.} This step concludes the proof by letting $\varepsilon\to0^+.$ We first show that, for all $t\in[0,T],$
\begin{equation}\label{jimasd}
\int_0^t\varphi^{(\varepsilon)}(s)ds\to \int_0^t\varphi(s)ds\,\,\,\text{in}\,\,\,L^1(\Omega).
\end{equation}
We write $\varphi^{(\varepsilon)}(s)=\varphi^{(\varepsilon)}_1(s)+\varphi^{(\varepsilon)}_2(s),$ where
$$\varphi^{(\varepsilon)}_1(s):=-\int_0^s\frac{\partial}{\partial s}k(s+\varepsilon,u)[b(s,X^{(\varepsilon)}_s)-b(s,X_s)-b(u,X^{(\varepsilon)}_u)+b(u,X_u)]du,$$
$$\varphi^{(\varepsilon)}_2(s):=-\int_0^s\frac{\partial}{\partial s}k(s+\varepsilon,u)[b(s,X_s)-b(u,X_u)]du,\,\,\,s\in[0,T].$$
By Lipschitz property of $b$ and the claim (\ref{y7m9}) we deduce
\begin{align*}
E|b(s,X^{(\varepsilon)}_s)&-b(s,X_s)-b(u,X^{(\varepsilon)}_u)+b(u,X_u)|\\
&\leq E|b(s,X^{(\varepsilon)}_s)-b(s,X_s)|+ E|b(u,X^{(\varepsilon)}_u)-b(u,X_u)|\\
&\leq C \varepsilon^{\alpha},\,\,\,\forall\,s,u\in[0,T].
\end{align*}
On the other hand, we deduce from the estimates (\ref{jdkm3}), (\ref{jdkm33}) and (\ref{jdm3d}) that
\begin{align*}
E|b(s,X^{(\varepsilon)}_s)&-b(s,X_s)-b(u,X^{(\varepsilon)}_u)+b(u,X_u)|\\
&\leq E|b(s,X^{(\varepsilon)}_s)-b(u,X^{(\varepsilon)}_u)|+ E|b(s,X_s)-b(u,X_u)|\\
&\leq C [(s-u)^{\beta_1}+(s-u)^{\frac{1}{2}}],\,\,\,\forall\,s,u\in[0,T].
\end{align*}
Now for any $0<\delta<\min\{\frac{\alpha+\beta_1-1}{\beta_1},2\alpha-1\}$ we have
\begin{align*}
E|\varphi^{(\varepsilon)}_1(s)|&\leq C\int_0^s\big|\frac{\partial}{\partial s}k(s+\varepsilon,u)\big|\varepsilon^{\delta\alpha}[(s-u)^{\beta_1}+(s-u)^{\frac{1}{2}}]^{1-\delta}du\\
&\leq C\int_0^s(s-u+\varepsilon)^{\alpha-2}\varepsilon^{\delta\alpha}[(s-u)^{(1-\delta)\beta_1}+(s-u)^{\frac{1-\delta}{2}}]du\\
&\leq C\int_0^s(s-u)^{\alpha-2}\varepsilon^{\delta\alpha}[(s-u)^{(1-\delta)\beta_1}+(s-u)^{\frac{1-\delta}{2}}]du\\
&=C\varepsilon^{\delta\alpha}\bigg[\frac{s^{\alpha+\beta_1-1-\delta\beta_1}}{\alpha+\beta_1-1-\delta\beta_1}
+\frac{s^{\alpha-\frac{1}{2}-\frac{\delta}{2}}}{\alpha-\frac{1}{2}-\frac{\delta}{2}}\bigg],\,\,\,\forall\,s\in[0,T].
\end{align*}
Consequently,
$$\int_0^t\varphi^{(\varepsilon)}_1(s)ds\to 0\,\,\,\text{in}\,\,\,L^1(\Omega).$$
Once again, we use Lipschitz property of $b$ and the estimates (\ref{jdkm3}) and (\ref{jdkm33}) to obtain
\begin{align*}
E|\varphi^{(\varepsilon)}_2(s)-\varphi(s)|&\leq\int_0^s\big|\frac{\partial}{\partial s}k(s+\varepsilon,u)-\frac{\partial}{\partial s}k(s,u)\big|E|b(s,X_s)-b(u,X_u)|du\\
&\leq CL\int_0^s\big|\frac{\partial}{\partial s}k(s+\varepsilon,u)-\frac{\partial}{\partial s}k(s,u)\big| [(s-u)^{\beta_1}+(s-u)^{\frac{1}{2}}]du,\,\,\,s\in[0,T].
\end{align*}
This, together with the continuity of $\frac{\partial}{\partial s}k(s,u),$ implies $E|\varphi^{(\varepsilon)}_2(s)-\varphi(s)|\to 0$ as $\varepsilon\to 0^+.$ Moreover, it follows from Assumption $(A_1),(ii)$ that
\begin{align*}
E|\varphi^{(\varepsilon)}_2(s)-\varphi(s)|&\leq 2CLc\int_0^s(s-u)^{\alpha-2} [(s-u)^{\beta_1}+(s-u)^{\frac{1}{2}}]du,
\end{align*}
which is an integrable function on $[0,T]$ (see the estimate (\ref{unkm1})). Hence, by the dominated convergence theorem, we have
$$\int_0^t\varphi^{(\varepsilon)}_2(s)ds\to \int_0^t\varphi(s)ds\,\,\,\text{in}\,\,\,L^1(\Omega).$$
So the convergence (\ref{jimasd}) holds true. Similarly, we write $\int_0^tk(s+\varepsilon,0)b(s,X^{(\varepsilon)}_s)ds=\int_0^tk(s+\varepsilon,0)[b(s,X^{(\varepsilon)}_s)-b(s,X_s)]ds
+\int_0^tk(s+\varepsilon,0)b(s,X_s)ds$ and we can infer that, for all $t\in[0,T],$
\begin{equation}\label{jimasd2}
\int_0^tk(s+\varepsilon,0)b(s,X^{(\varepsilon)}_s)ds\to \int_0^t k(s,0)b(s,X_s)ds\,\,\,\text{in}\,\,\,L^1(\Omega).
\end{equation}
By (\ref{y7m9}) and (\ref{y7m9a}) we also have, for all $t\in[0,T],$
\begin{equation}\label{jimasd3}
\int_0^tg(t,s)\sigma(s,X^{(\varepsilon)}_s)dB_s\to \int_0^tg(t,s)\sigma(s,X_s)dB_s\,\,\,\text{in}\,\,\,L^1(\Omega).
\end{equation}
Recalling (\ref{ujm1}), we obtain from (\ref{jimasd}), (\ref{jimasd2}) and (\ref{jimasd3}) that
\begin{equation}\label{ymke1}
X_t=x_0+\int_0^t\big(k(s,0)b(s,X_s)+\varphi(s)\big)ds+\int_0^tg(t,s)\sigma(s,X_s)dB_s,\,\,\,t\in[0,T].
\end{equation}
To finish the proof, we use stochastic Fubini's theorem to get
\begin{align*}
\int_0^tG(s)ds&=\int_0^t\bigg(\int_0^s\frac{\partial}{\partial s}g(s,u)\sigma(u,X_u)dB_u\bigg)ds\\
&=\int_0^t\bigg(\int_u^t\frac{\partial}{\partial s}g(s,u)\sigma(u,X_u)ds\bigg)dB_u\\
&=\int_0^tg(t,u)\sigma(u,X_u)dB_u-\int_0^tg(u,u)\sigma(u,X_u)dB_u.
\end{align*}
As a consequence, (\ref{ymke}) follows from (\ref{ymke1}).
\end{proof}
Let us now consider the equation (\ref{fko3}). It is easy to see that $k(t,s)=(t-s)^{\alpha-1}$ satisfies Assumption $(A_1).$ Hence, under Assumptions $(A_2)$ and $(A_3),$ we can rewrite (\ref{fko3}) as follows
\begin{equation}\label{yy7e}
X_t=x_0+\int_0^t \big(s^{\alpha-1}b(s,X_s)+\varphi(s)\big)ds+\int_0^t\sigma(s,X_s)dB_s,\,\,\,t\in[0,T],
\end{equation}
The results obtained in \cite{Gerhold2015}  tell us that $X_t$ fulfills the small time central limit theorem, i.e. $\frac{X_a-x_0}{\sqrt{a}}\to N_\sigma$ in distribution as $a\to0^+,$ where $N_{\sigma}$ is a normal random variable with mean $0$ and variance $\sigma^2(0,x_0).$ Our purpose here is to go a further step, we would like to investigate the rate of this convergence. For this purpose, we will provide an explicit bound on Wasserstein distance. Recall that the Wasserstein distance between the laws of two random variables $F$ and $G$ is defined by
$$d_W(F,G):=\sup\limits_{|h(x)-h(y)|\leq |x-y|}|E[h(F)]-E[h(G)]|=\sup\limits_{h\in \mathcal{C}^1,\|h'\|_\infty\leq 1}|E[h(F)]-E[h(G)]|,$$
where $\|.\|_\infty$ denotes the supremum norm, see e.g. \cite{Chen2015}.

The next statement is the second main result of the present paper.
\begin{thm}\label{ttklms}Suppose Assumptions  $(A_2)$ and $(A_3).$ In addition, we assume that there exists $\beta_2>0$ such that
\begin{equation}\label{tyn3}
|\sigma(t,x)-\sigma(0,x)|\leq L|t|^{\beta_2}, \quad \forall x\in \mathbb{R}, t\in[0,T] .
\end{equation}
Then, the solution $(X_t)_{t\in[0,T]}$ of the equation (\ref{fko3}) satisfies
$$d_W\left(\frac{X_a-x_0}{\sqrt{a}},N_{\sigma}\right)\leq C \left( a^{\alpha-\frac{1}{2}}+ a^{\frac{\beta_2}{2}\wedge\frac{1}{4}}\right),\,\,\,a\to 0^+,$$
where $C$ is a positive constant not depending on $a.$
\end{thm}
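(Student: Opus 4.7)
The plan is to combine the It\^o differential form (\ref{yy7e}) supplied by Theorem \ref{tmkl3} (which applies to (\ref{fko3}) since $k(t,s)=(t-s)^{\alpha-1}$ readily verifies $(A_1)$) with the coupling inequality $d_W(F,G)\leq E|F-G|$, valid whenever $F$ and $G$ are defined on a common probability space. A convenient representation of $N_\sigma$ on the same space as $X$ is $N_\sigma := \sigma(0,x_0)\,B_a/\sqrt{a}$, which is indeed a normal variable with mean $0$ and variance $\sigma^2(0,x_0)$. Subtracting $N_\sigma$ from $(X_a-x_0)/\sqrt{a}$ in (\ref{yy7e}) produces the decomposition
\begin{equation*}
\frac{X_a-x_0}{\sqrt{a}} - N_\sigma \;=\; \mathrm{I}_a + \mathrm{II}_a + \mathrm{III}_a,
\end{equation*}
with $\mathrm{I}_a := \tfrac{1}{\sqrt{a}}\int_0^a s^{\alpha-1}b(s,X_s)\,ds$, $\mathrm{II}_a := \tfrac{1}{\sqrt{a}}\int_0^a\varphi(s)\,ds$, and $\mathrm{III}_a := \tfrac{1}{\sqrt{a}}\int_0^a[\sigma(s,X_s)-\sigma(0,x_0)]\,dB_s$. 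It then suffices to bound the three expectations $E|\mathrm{I}_a|$, $E|\mathrm{II}_a|$, $E|\mathrm{III}_a|$.

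The first two terms dictate the rate $a^{\alpha-1/2}$. The linear growth of $b$ together with the uniform moment bound (\ref{jdkm2}) gives $E|\mathrm{I}_a|\leq C a^{-1/2}\int_0^a s^{\alpha-1}ds = C\,a^{\alpha-1/2}$. For $\mathrm{II}_a$, I would reuse the estimate (\ref{unkm1}) already established inside the proof of Theorem \ref{tmkl3}, yielding $E|\mathrm{II}_a|\leq C(a^{\alpha+\beta_1-1/2}+a^{\alpha})$; since $\beta_1>1-\alpha$ and $\alpha>1/2$, both exponents exceed $\alpha-1/2$, so $E|\mathrm{II}_a|$ is absorbed into $C\,a^{\alpha-1/2}$.

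The stochastic term $\mathrm{III}_a$ is the main step and is responsible for the second rate. By Jensen's inequality and It\^o's isometry,
\begin{equation*}
E|\mathrm{III}_a| \leq \left(\frac{1}{a}\int_0^a E|\sigma(s,X_s)-\sigma(0,x_0)|^2\,ds\right)^{1/2}.
\end{equation*}
Splitting $\sigma(s,X_s)-\sigma(0,x_0) = [\sigma(s,X_s)-\sigma(0,X_s)] + [\sigma(0,X_s)-\sigma(0,x_0)]$, using the new hypothesis (\ref{tyn3}) on the first piece and the Lipschitz property of $\sigma(0,\cdot)$ combined with (\ref{jdkm3}) at $p=2$ on the second, yields $E|\sigma(s,X_s)-\sigma(0,x_0)|^2 \leq C(s^{2\beta_2}+s)$. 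Integrating on $[0,a]$ and taking the square root gives $E|\mathrm{III}_a| \leq C(a^{\beta_2}+a^{1/2}) \leq C\,a^{\beta_2\wedge 1/2}$, which comfortably dominates the claimed $a^{\beta_2/2\wedge 1/4}$.

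Collecting these three contributions and invoking the coupling inequality delivers the stated bound. The main place requiring care is the bookkeeping of exponents in $\mathrm{II}_a$: one must verify that both summands coming from (\ref{unkm1}) are really of smaller order than $a^{\alpha-1/2}$, which is precisely where the assumption $\beta_1>1-\alpha$ is used. All other ingredients are routine given the estimates already developed in Proposition \ref{kol,w} and Theorem \ref{tmkl3}.
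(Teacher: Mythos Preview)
Your proof is correct and in fact sharper than the paper's, but the method is genuinely different.

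The paper proceeds in two steps. First, for $h\in\mathcal{C}^2_b$ it introduces the interpolation function
\[
\mathcal{H}(t,x)=\int_{-\infty}^\infty h\!\left(\frac{x-x_0}{\sqrt{a}}+\sigma(0,x_0)\sqrt{1-\tfrac{t}{a}}\,z\right)\phi(z)\,dz,
\]
applies It\^o's formula to $\mathcal{H}(t,X_t)$ using the representation (\ref{yy7e}), and exploits the heat-equation identity $\partial_t\mathcal{H}=-\tfrac{\sigma^2(0,x_0)}{2}\partial_{xx}\mathcal{H}$ to obtain
\[
\big|Eh\big(\tfrac{X_a-x_0}{\sqrt{a}}\big)-Eh(N_\sigma)\big|\leq C\big(\|h'\|_\infty\, a^{\alpha-1/2}+\|h''\|_\infty\, a^{\beta_2\wedge 1/2}\big).
\]
Second, a smoothing argument replaces Lipschitz test functions by $\mathcal{C}^2_b$ approximations $h_u$ with $\|h_u''\|_\infty\leq u^{-1/2}$; optimizing in $u$ converts the $\|h''\|_\infty$--term into $a^{(\beta_2\wedge 1/2)/2}=a^{\beta_2/2\wedge 1/4}$.

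You bypass both steps by realizing $N_\sigma$ on the same probability space as $\sigma(0,x_0)B_a/\sqrt{a}$ and invoking the coupling bound $d_W(F,G)\leq E|F-G|$ directly. The drift pieces $\mathrm{I}_a,\mathrm{II}_a$ are then handled exactly as the paper handles the analogous integral in (\ref{o2}), while for $\mathrm{III}_a$ the It\^o isometry gives $E|\mathrm{III}_a|\leq C\,a^{\beta_2\wedge 1/2}$ without any loss. Your argument is therefore more elementary and yields the strictly better rate $a^{\alpha-1/2}+a^{\beta_2\wedge 1/2}$, which of course dominates the stated $a^{\alpha-1/2}+a^{\beta_2/2\wedge 1/4}$. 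The paper's Stein-type route would be the natural one if no convenient coupling were available or if the target distance were defined over smoother test functions; for the Wasserstein distance here, your approach is both simpler and stronger.
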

\begin{proof}We separate the proof into two steps.

\noindent{\it Step 1.} Denote by $\mathcal{C}^2_b$ the space of twice differentiable functions with bounded derivatives. In this step, we claim that for any $h\in \mathcal{C}^2_b$ and $a\in (0,1],$ we have
\begin{equation}\label{d4is1t3}
\big|Eh\left(\frac{X_a-x_0}{\sqrt{a}}\right)-Eh(N_\sigma)\big|\leq C \left(\|h'\|_\infty a^{\alpha-\frac{1}{2}}+\|h''\|_\infty a^{\beta_2\wedge\frac{1}{2}}\right),
\end{equation}
where $C$ is a positive constant not depending on $a$ and $h.$

We consider the interpolation function $\mathcal{H}:[0,a]\times \mathbb{R}\longrightarrow \mathbb{R}$ which is defined by
 $$\mathcal{H}(t,x)=\int_{-\infty}^\infty h\left(\frac{x-x_0}{\sqrt{a}}+\sigma(0,x_0)\sqrt{1-\frac{t}{a}}\,\,z\right)\phi(z)dz,$$
where $\phi(z)=\frac{1}{\sqrt{2\pi}}e^{-\frac{z^2}{2}}$ is the density function of standard normal random variable. Let $Z$ denote a standard normal random variable that is independent of $B.$ We have
\begin{align*}
E\left[h\left(\frac{X_t-x_0}{\sqrt{a}}+\sigma(0,x_0)\sqrt{1-\frac{t}{a}}Z\right)\right]
&=E\left[E\left[h\left(\frac{x-x_0}{\sqrt{a}}+\sigma(0,x_0)\sqrt{1-\frac{t}{a}}Z\right)\right]\bigg|_{x=X_t}\right]\\
&=E[\mathcal{H}(t,X_t)],\,\,\,0\leq t\leq a.
\end{align*}
Hence, we can obtain the following relations
\begin{equation}\label{dmsow2}
\text{$Eh\left(\frac{X_a-x_0}{\sqrt{a}}\right)=E\mathcal{H}(a,X_a)$ and $Eh(N_{\sigma})=E\mathcal{H}(0,x_0)$}.
\end{equation}
By straightforward calculations we obtain
$$\frac{\partial}{\partial x}\mathcal{H}(t,x)=\frac{1}{\sqrt{a}}\int_{-\infty}^\infty h'\left(\frac{x-x_0}{\sqrt{a}}+\sigma(0,x_0)\sqrt{1-\frac{t}{a}}\,\,z\right)\phi(z)dz,$$
$$\frac{\partial^2}{\partial x^2}\mathcal{H}(t,x)=\frac{1}{a}\int_{-\infty}^\infty h''\left(\frac{x-x_0}{\sqrt{a}}+\sigma(0,x_0)\sqrt{1-\frac{t}{a}}\,\,z\right)\phi(z)dz,$$
As a consequence,
\begin{equation}\label{dkds}
\big|\frac{\partial}{\partial x}\mathcal{H}(t,x)\big|\leq \frac{\|h'\|_\infty }{\sqrt{a}},\,\,\,\,\big|\frac{\partial^2}{\partial y^2}\mathcal{H}(t,x)\big|\leq \frac{\|h''\|_\infty }{a}
\end{equation}
for all $(t,x)\in [0,a]\times \mathbb{R}.$ On the other hand, we have
\begin{align*}\frac{\partial}{\partial t}\mathcal{H}(t,x)&=\frac{-\sigma(0,x_0)}{2a\sqrt{1-\frac{t}{a}}}\int_{-\infty}^\infty h'\left(\frac{x-x_0}{\sqrt{a}}+\sigma(0,x_0)\sqrt{1-\frac{t}{a}}\,\,z\right)z\phi(z)dz\\
&=\frac{\sigma(0,x_0)}{2a\sqrt{1-\frac{t}{a}}}\int_{-\infty}^\infty h'\left(\frac{x-x_0}{\sqrt{a}}+\sigma(0,x_0)\sqrt{1-\frac{t}{a}}\,\,z\right)d\phi(z),
\end{align*}
here we note that  $\phi'(z)=-z\phi(z).$ Then, by the integration by parts formula
\begin{align}\frac{\partial}{\partial t}\mathcal{H}(t,x)&=-\frac{\sigma^2(0,x_0)}{2a}\int_{-\infty}^\infty h''\left(\frac{x-x_0}{\sqrt{a}}+\sigma(0,x_0)\sqrt{1-\frac{t}{a}}\,\,z\right)\phi(z)dz\nonumber\\
&=-\frac{\sigma^2(0,x_0)}{2}\frac{\partial^2}{\partial x^2}\mathcal{H}(t,x).\label{thd10}
\end{align}
By It\^o differential formula, we obtain from (\ref{yy7e}) that
\begin{align*}
\mathcal{H}(t,X_t)-\mathcal{H}(0,x_0)&=\int_0^t \frac{\partial}{\partial s}\mathcal{H}(s,X_s)ds+\int_0^t\frac{\partial}{\partial x}\mathcal{H}(s,X_s)dX_s+\frac{1}{2}\int_0^t\frac{\partial^2}{\partial x^2}\mathcal{H}(s,X_s)\sigma^2(s,X_s)ds\\
&=\int_0^t \bigg(\frac{\partial}{\partial s}\mathcal{H}(s,X_s)+\frac{\partial}{\partial x}\mathcal{H}(s,X_s)\bar{b}(s,X_s)+\frac{1}{2}\frac{\partial^2}{\partial x^2}\mathcal{H}(s,X_s)\sigma^2(s,X_s)\bigg)ds\\
&
+\int_0^t \frac{\partial}{\partial x}\mathcal{H}(s,X_s)\sigma(s,X_s)dB_s,\,\,\,t\in[0,a],
\end{align*}
where $\bar{b}(s,X_s):=s^{\alpha-1}b(s,X_s)+\varphi(s).$ Then, by the relation (\ref{thd10})
\begin{multline}\label{clts04}
\mathcal{H}(t,X_t)-\mathcal{H}(0,x_0)=\int_0^t \bigg(\frac{\partial}{\partial x}\mathcal{H}(s,X_s)\bar{b}(s,X_s)+\frac{1}{2}\frac{\partial^2}{\partial x^2}\mathcal{H}(s,X_s)[\sigma^2(s,X_s)-\sigma^2(0,x_0)]\bigg)ds\\
+\int_0^t \frac{\partial}{\partial x}\mathcal{H}(s,X_s)\sigma(s,X_s)dB_s,\,\,\,t\in[0,a].
\end{multline}
We therefore obtain
\begin{equation}\label{o1}
E[\mathcal{H}(a,X_a)]-E[\mathcal{H}(0,x_0)]=\int_0^a E\bigg[\frac{\partial}{\partial x}\mathcal{H}(s,X_s)\bar{b}(s,X_s)+\frac{1}{2}\frac{\partial^2}{\partial x^2}\mathcal{H}(s,X_s)[\sigma^2(s,X_s)-\sigma^2(0,x_0)]\bigg]ds.
\end{equation}
It follows from the estimates (\ref{unkm1}) and (\ref{dkds}) that
\begin{align}
\big|\int_0^a E\bigg[\frac{\partial}{\partial x}\mathcal{H}(s,X_s)\bar{b}(s,X_s)\bigg]ds\big|&\leq \frac{\|h'\|_\infty }{\sqrt{a}}\int_0^a E|\bar{b}(s,X_s)|ds\notag\\
&\leq \frac{\|h'\|_\infty}{\sqrt{a}}\int_0^a[s^{\alpha-1}E|b(s,X_s)|+E|\varphi(s)|]ds\notag\\
&\leq \frac{\|h'\|_\infty }{\sqrt{a}}\int_0^a\bigg[Cs^{\alpha-1}+CL\bigg(\frac{s^{\alpha+\beta_1-1}}{\alpha+\beta_1-1}
+\frac{s^{\alpha-\frac{1}{2}}}{\alpha-\frac{1}{2}}\bigg)\bigg]ds\notag\\
&\leq \frac{\|h'\|_\infty}{\sqrt{a}}\bigg[C\frac{a^{\alpha}}{\alpha}+CL\bigg(\frac{a^{\alpha+\beta_1}}{(\alpha+\beta_1)(\alpha+\beta_1-1)}
+\frac{a^{\alpha+\frac{1}{2}}}{\alpha^2-\frac{1}{4}}\bigg)\bigg]\notag\\
&\leq C\|h'\|_\infty a^{\alpha-\frac{1}{2}},\,\,\,a\in (0,1].\label{o2}
\end{align}
From (\ref{jdkm3}) and (\ref{tyn3}) we obtain
\begin{align*}
E|\sigma^2(s,X_s)-\sigma^2(0,x_0)|&\leq (E|\sigma(s,X_s)+\sigma(0,x_0)|^2)^{\frac{1}{2}}(E|\sigma(s,X_s)-\sigma(0,x_0)|^2)^{\frac{1}{2}}\\
&\leq C[(E|\sigma(s,X_s)-\sigma(0,X_s)|^2)^{\frac{1}{2}}+(E|\sigma(0,X_s)-\sigma(0,x_0)|^2)^{\frac{1}{2}}]\\
&\leq CL(s^{\beta_2}+s^{\frac{1}{2}}).
\end{align*}
Hence
\begin{align}
\big|\int_0^a E\bigg[\frac{1}{2}\frac{\partial^2}{\partial x^2}\mathcal{H}(s,X_s)[\sigma^2(s,X_s)-\sigma^2(0,x_0)]\bigg]ds\big|&\leq\frac{\|h''\|_\infty}{2a} \int_0^a E|\sigma^2(s,X_s)-\sigma^2(0,x_0)|ds\notag\\
&\leq\frac{\|h''\|_\infty}{2a} CL\bigg(\frac{a^{\beta_2+1}}{\beta_2+1}+\frac{2a^{\frac{3}{2}}}{3}\bigg)\notag\\
&\leq C\|h''\|_\infty a^{\beta_2\wedge \frac{1}{2}},\,\,\,a\in (0,1].\label{o3}
\end{align}
So we obtain (\ref{d4is1t3}) by combining (\ref{dmsow2}), (\ref{o1}), (\ref{o2}) and (\ref{o3}).

\noindent{\it Step 2.} In this step we use the smoothing technique to bound Wasserstein distance from (\ref{d4is1t3}). Let $h\in \mathcal{C}^1$ with $\|h'\|_\infty\leq 1.$ For each $u \in(0,1),$ define the function
$$h_u(x):=\int_{-\infty}^\infty h(\sqrt{u}y+\sqrt{1-u}\,x)\phi(y)dy,\,\,\,x\in \mathbb{R}.$$
Obviously, $\|h'_u\|_\infty\leq\sqrt{1-u}\|h'\|_\infty\leq 1.$ Moreover, as in the proof of Theorem 3.6 in \cite{Privault2015}, we have
$$\|h''_u\|_\infty\leq \frac{1}{\sqrt{u}},\,\,\,|E[h(F)]-E[h_u(F)]|\leq \sqrt{u}\left(1+\frac{E|F|}{2}\right)$$
for any $u\in(0,\frac{1}{2})$ and $F\in L^2(\Omega).$ Those estimates, combined with the result of {\it Step 1} yield
\begin{align*}
&\big|Eh\left(\frac{X_a-x_0}{\sqrt{a}}\right)-Eh(N_\sigma)\big|\\
&\leq \big|Eh\left(\frac{X_a-x_0}{\sqrt{a}}\right)-Eh_u\left(\frac{X_a-x_0}{\sqrt{a}}\right)\big|+\left|Eh(N_\sigma)-Eh_u(N_\sigma)\right|
+\big|Eh_u\left(\frac{X_a-x_0}{\sqrt{a}}\right)-Eh_u(N_\sigma)\big|\\
&\leq  \sqrt{u}\left(1+\frac{E|X_a-x_0|}{2\sqrt{a}}\right)+ \sqrt{u}\left(1+\frac{E|N_\sigma|}{2}\right)+C \left(\|h'_u\|_\infty a^{\alpha-\frac{1}{2}}+\|h''_u\|_\infty a^{\beta_2\wedge\frac{1}{2}}\right)\\
&\leq  \sqrt{u}\left(1+\frac{E|X_a-x_0|}{2\sqrt{a}}\right)+ \sqrt{u}\left(1+\frac{|\sigma(0,x_0)|}{2}\right)+C \left(a^{\alpha-\frac{1}{2}}+\frac{1}{\sqrt{u}} a^{\beta_2\wedge\frac{1}{2}}\right),\,\,u\in(0,\frac{1}{2}).
\end{align*}
We observe from (\ref{jdkm3})  that $\frac{E|X_a-x_0|}{\sqrt{a}}\leq C,a\in (0,1].$ Hence,
\begin{align}
\big|Eh\left(\frac{X_a-x_0}{\sqrt{a}}\right)-Eh(N_\sigma)\big|&\leq \frac{ \sqrt{u}}{2}\left(4+C+|\sigma(0,x_0)|\right)+\frac{C}{\sqrt{u}}a^{\beta_2\wedge\frac{1}{2}}+Ca^{\alpha-\frac{1}{2}},\,\,u\in(0,\frac{1}{2}).\label{hj2dk01}
\end{align}
Minimizing in $u,$ the right hand side of (\ref{hj2dk01}) attains its minimum value at $u_0:=\frac{2Ca^{\beta_2\wedge\frac{1}{2}}}{4+C+|\sigma(0,x_0)|}.$
When $a\to 0^+,$ we have $u_0\in (0,\frac{1}{2}).$ So the conclusion follows substituting $u_0$ in (\ref{hj2dk01}) and then taking the supremum over all $h\in \mathcal{C}^1$ with $\|h'\|_\infty\leq 1.$

The proof of Theorem is complete.
\end{proof}

\noindent {\bf Acknowledgments.} This research was funded by Vietnam National Foundation for Science and Technology Development (NAFOSTED) under grant number 101.03-2019.08.


\begin{thebibliography}{99}

\bibitem{Chen2015} L.H.Y. Chen, Stein meets Malliavin in normal approximation. {\it Acta Math. Vietnam.} 40 (2015), no. 2, 205--230.

\bibitem{Cochran1995} W.G. Cochran, J.S. Lee, J. Potthoff, Stochastic Volterra equations with singular kernels. {\it Stochastic Process. Appl.} 56 (1995), no. 2, 337--349.

\bibitem{ntdung2018}N.T. Dung, An It\^o formula for stochastic Volterra equations, submitted for publication.

\bibitem{Ferreyra2000}G. Ferreyra, P. Sundar, Comparison of stochastic Volterra equations. {\it Bernoulli} 6 (2000), no. 6, 1001--1006.


\bibitem{Gerhold2015} S. Gerhold, M. Kleinert, P. Porkert, M. Shkolnikov, Small time central limit theorems for semimartingales with applications. {\it Stochastics} 87 (2015), no. 5, 723--746.

\bibitem{Jovanovi2002} M. Jovanovi\'c, S. Jankovi\'c, On perturbed nonlinear Itô type stochastic integrodifferential equations. {\it J. Math. Anal. Appl.} 269 (2002), no. 1, 301--316.

\bibitem{Maleknejad2012} K. Maleknejad, M. Khodabin, M. Rostami,  Numerical solution of stochastic Volterra integral equations by a stochastic operational matrix based on block pulse functions. {\it Math. Comput. Modelling} 55 (2012), no. 3-4, 791--800.
\bibitem{Mao2006} X. Mao, M. Riedle, Mean square stability of stochastic Volterra integro-differential equations. {\it Systems Control Lett.} 55 (2006), no. 6, 459--465.


\bibitem{Privault2015}N. Privault, G. L. Torrisi, The Stein and Chen-Stein methods for functionals of non-symmetric Bernoulli processes. {\it ALEA Lat. Am. J. Probab. Math. Stat.} 12 (2015), no. 1, 309--356.

\bibitem{Son2018} D.T. Son, P.T. Huong, P.E. Kloeden, H.T. Tuan, Asymptotic separation between solutions of Caputo fractional stochastic differential equations. {\it Stoch. Anal. Appl.} 36 (2018), no. 4, 654--664.



\bibitem{Wu2012} X. Wu, L. Yan, On solutions of neutral stochastic delay Volterra equations with singular kernels. {\it Electron. J. Qual. Theory Differ. Equ.} 2012, No. 74, 18 pp.

\end{thebibliography}
\end{document}